\newtheorem{theorem}{Theorem}[section]
\newtheorem{corollary}[theorem]{Corollary}
\newtheorem{proposition}[theorem]{Proposition}
\newtheorem{definition}[theorem]{Definition}
\newtheorem{example}[theorem]{Example}
\newtheorem{remark}[theorem]{Remark}
\newenvironment{proof}{{\bf Proof}}{{\hfill $ \Box $}\vskip 4mm}
\def\Box{\mbox{$\sqcap\!\!\!\!\sqcup$}}
\def\ot{\otimes}
\def\ra{\rightarrow}
\def\eps{\varepsilon}
\def\bea{\begin{eqnarray*}}
\def\eea{\end{eqnarray*}}
\begin{document}
\title{Symmetric algebras of corepresentations and smash products}

\author{
S. D\u{a}sc\u{a}lescu$^1$, C. N\u{a}st\u{a}sescu$^{1,2}$ and L.
N\u{a}st\u{a}sescu$^{1,2}$\\
$^1$ University of Bucharest, Faculty of Mathematics and Computer
Science,\\
 Str. Academiei 14, Bucharest 1, RO-010014, Romania\\
$^2$ Institute of Mathematics of the Romanian Academy, PO-Box
1-764\\
RO-014700, Bucharest, Romania\\
 e-mail: sdascal@fmi.unibuc.ro,
 Constantin\_nastasescu@yahoo.com,\\
 lauranastasescu@gmail.com
}

\date{}
\maketitle

\begin{abstract}
We investigate Frobenius algebras and symmetric algebras in the
monoidal category of right comodules over a Hopf algebra $H$; for
the symmetric property $H$ is assumed to be cosovereign. If $H$ is
finite dimensional and $A$ is an $H$-comodule algebra, we uncover
the connection between $A$ and the smash product $A\# H^*$ with
respect to the
Frobenius and symmetric properties.\\
Mathematics Subject Classification: 16T05, 18D10, 16S40\\
Key words: Hopf algebra, comodule, monoidal category, Frobenius
algebra,  symmetric algebra, smash product.
\end{abstract}

\section{Introduction and preliminaries}
We work over a basic field $k$. A finite dimensional algebra $A$
is called Frobenius if $A$ and its dual $A^*$ are isomorphic as
left (or equivalently, as right) $A$-modules. Frobenius algebras
arise in representation theory, Hopf algebra theory, quantum
groups, cohomology of compact oriented manifolds, topological
quantum field theory, the theory of subfactors and of extensions
of $C^*$-algebras, the quantum Yang-Baxter equation, etc., see
\cite{kadison}. A rich representation theory has been uncovered
for such algebras, see \cite{lam}, \cite{skr}. It was showed in
\cite{abrams1}, \cite{abrams2} that $A$ is Frobenius if and only
if it also has a coalgebra structure whose comultiplication is a
morphism of $A,A$-bimodules. This equivalent definition of the
Frobenius property has the advantage that it makes sense in any
monoidal category. The study of Frobenius algebras in monoidal
categories was initiated in \cite{muger}, \cite{street}, \cite{y},
and such objects have occurred in several contexts, for example in
the theory of Morita equivalences of tensor categories, in
conformal quantum field theory, in reconstruction theorems for
modular tensor categories, see more details and references in
\cite{fuchs}, \cite{fs}, \cite{muger2}; recent developments can be
also found in \cite{bt}.

 The representation
theory of Frobenius algebras uncovers several symmetry features,
for example there is a duality between the categories of left and
right finitely generated modules, and the lattices of left and
right ideals are anti-isomorphic. Among Frobenius algebras there
is a class of objects having even more symmetry. These are the
symmetric algebras $A$, for which $A$ and $A^*$ are isomorphic as
$A,A$-bimodules. The category of commutative symmetric algebras is
equivalent to the category of 2-dimensional topological quantum
field theories, see \cite{abrams1}. Symmetric algebras appear in
block theory of group algebras in positive characteristic, see
\cite[Chapter IV]{skr}.

It is not clear how one could define symmetric algebras in an
arbitrary monoidal category. Symmetric algebras in monoidal
categories with certain properties were first considered in
\cite{frs}, as structures related to correlation functions in
conformal field theories. In \cite{fuchs} symmetric algebras are
discussed in sovereign monoidal categories.
 In this paper we consider the monoidal
category ${\cal M}^H$ of right comodules (or corepresentations)
over a Hopf algebra $H$. If $A$ is an algebra in this category,
i.e. a right $H$-comodule algebra, then $A\in \, _A{\cal M}^H_A$,
i.e. $A$ is a left $(A,H)$-Doi-Hopf module and a right
$(A,H)$-Doi-Hopf module. On the other hand, $A^*$ is a right
$(A,H)$-Doi-Hopf module, but not necessarily a left
$(A,H)$-Doi-Hopf module; however $A^*$ has a natural structure of
a left $(A^{(S^2)},H)$-Doi-Hopf module, where $A^{(S^2)}$ is the
algebra $A$ with the coaction shifted by $S^2$, where $S$ is the
antipode of $H$. If $H$ is cosovereign, i.e. there exists a
character $u$ on $H$ such that $S^2(h)=\sum u^{-1}(h_1)u(h_3)h_2$
 for any $h\in H$, then $A\simeq A^{(S^2)}$ as comodule algebras,
 and this induces a structure of $A^*$ as an object in $_A{\cal
 M}^H_A$, where the left $A$-action is a deformation of the usual one  by
 $u$. Then it makes sense to consider when $A$ and
$A^*$ are isomorphic in this category; in this case we say that
$A$ is symmetric in ${\cal M}^H$ with respect to $u$, or shortly
that $A$ is $(H,u)$-symmetric. As ${\cal M}^H$ is a sovereign
monoidal category for such $H$, this is a special case of the
general concept of symmetric algebra considered in \cite{fuchs}.
In Section \ref{sectioncorepresentations} we give explicit
 characterizations of this property in ${\cal M}^H$.  We show that
the definition of symmetry depends on the character (i.e. on the
associated sovereign structure of ${\cal M}^H$).
 Also, we
use a modified version of the trivial extension construction to
give examples of $(H,u)$-symmetric algebras of corepresentations.
In the case where $H$ is involutory, i.e. $S^2=Id$, $H$ is
cosovereign if we take $u=\eps$, the counit of $H$, and in this
case it is clear that an $(H,\eps)$-symmetric algebra is also
symmetric as a $k$-algebra. However, we show that in general $H$
may be $(H,u)$-symmetric, without being symmetric as a
$k$-algebra.

Given a finite dimensional algebra $A$ in the category ${\cal
M}^H$, where $H$ is a finite dimensional Hopf algebra, one can
construct the smash product $A\# H^*$. Smash products are also
called semidirect products, since the group algebra of a
semidirect product of groups is just a smash product. Smash
product constructions are of great relevance since they describe
the algebra structure in a process of bosonization, which
associates for instance a Hopf algebra to a Hopf superalgebra. It
is proved in \cite{bergen} that $A$ is Frobenius if and only if so
is $A\# H^*$. On the other hand, we show in an example that such a
good connection does not hold for the symmetric property. In
Section \ref{sectionFrobenius} we show that if $A$ is a Frobenius
algebra in ${\cal M}^H$, then $A\# H^*$ is a Frobenius algebra in
${\cal M}^{H^*}$, but the converse does not hold.  In Section
\ref{sectionsymmetric} we uncover a good transfer of the symmetry
property between $A$ and $A\# H^*$, more precisely we show that
$A$ is $(H,\alpha)$-symmetric if and only if $A\# H^*$ is
$(H^*,g)$-symmetric, where $g$ and $\alpha$ are the distinguished
grouplike (or modular) elements of $H$ and $H^*$, provided that
$H$ is cosovereign by $\alpha$, and $H^*$ is cosovereign by $g$.

For basic concepts and notation on Hopf algebras we refer to
\cite{DNR}, \cite{radford}.

\section{Frobenius algebras and symmetric algebras of
corepresentations} \label{sectioncorepresentations}

Let $H$ be a Hopf algebra, and let $A$ be a finite dimensional
right $H$-comodule algebra, with $H$-coaction $a\mapsto \sum
a_0\ot a_1$. Then there exists an element $\sum_i a_i\ot h_i\ot
a_i^*\in A\ot H\ot A^*$ such that $\sum a_0\ot
a_1=\sum_ia_i^*(a)a_i\ot h_i$ for any $a\in A$; this element
corresponds to the $H$-comodule structure map of $A$ through the
natural isomorphism $A\ot H\ot A^*\simeq Hom(A, A\ot H)$. A right
$H$-comodule structure is induced on $A^*$ by
$$a^*\mapsto \sum_i a^*(a_i)a_i^*\ot S(h_i), \;\;\;\mbox{for any
}a^*\in A^*.$$ If we consider the left $H^*$-actions on $A$ and
$A^*$ associated with these right $H$-comodule structures, denoted
by $h^*\cdot a$ and $h^*\cdot a^*$ for $h^*\in H^*, a\in A, a^*\in
A^*$, we have \bea (h^*\cdot
a^*)(a)&=&\sum_ih^*(S(h_i))a^*(a_i)a_i^*(a)\\
&=&a^*(\sum_i h^*(S(h_i))a_i^*(a)a_i)\\
&=&a^*((h^*S)\cdot a)\eea so
\begin{equation} \label{formulaacth*}
(h^*\cdot a^*)(a)=a^*((h^*S)\cdot a)
\end{equation}
Moreover, $A^*\in {\cal M}^H_A$, with the usual right $A$-action;
this means that the $A$-module structure of $A^*$ is right
$H$-colinear. It is known (see \cite[Theorem 2.4]{dnn}) that the
following are equivalent: (1) $A\simeq A^*$ in ${\cal M}^H_A$; (2)
There exists a nondegenerate associative bilinear form $B:A\times
A\ra k$ such that $B(h^*\cdot a,b)=B(a,(h^*S)\cdot b)$ for any
$a,b\in A, h^*\in H^*$; (3) There exists a linear map
$\lambda:A\ra k$ such that $\lambda(h^*\cdot a)=h^*(1)\lambda(a)$
for any $a\in A,h^*\in H^*$, and ${\rm Ker}\,\lambda$ does not
contain a non-zero right ideal of $A$; (4) There exists a linear
map $\lambda:A\ra k$ such that $\lambda(h^*\cdot
a)=h^*(1)\lambda(a)$ for any $a\in A,h^*\in H^*$, and ${\rm
Ker}\,\lambda$ does not contain a non-zero subobject of $A$ in
${\cal M}^H_A$; (5) $A$ is a Frobenius algebra in the category
${\cal M}^H$. The connections between an isomorphism $\theta:A\ra
A^*$ as in (1), a bilinear map $B$ as in (2) and a linear map
$\lambda$ as in (3), (4) are given by $\theta(a)(b)=B(a,b)$,
$\lambda(a)=B(1,a)$,
$B(a,b)=\lambda(ab)$.\\

On the other hand, $A^*$ is also a left $A$-module in a natural
way, but in general $A^*$ is not an object of $_A{\cal M}^H$ (with
a similar compatibility condition for the $A$-action and
$H$-coaction). However, $A^*$ is an object in $_{A^{(S^2)}}{\cal
M}^H$, where $A^{(S^2)}$ is just the algebra $A$, with the
$H$-coaction shifted
by $S^2$, i.e. $a\mapsto \sum a_0\ot S^2(a_1)$.\\

Assume now that $H$ is a cosovereign Hopf algebra in the sense of
\cite{bichon}, i.e. there exists a character $u$ on $H$ (in other
words, $u$ is a grouplike element of the dual Hopf algebra $H^*$,
or equivalently, an algebra morphism from $H$ to $k$) such that
$S^2(h)=\sum u^{-1}(h_1)u(h_3)h_2$ for any $h\in H$; this is the
same with $(S^2)^*$ being an inner algebra automorphism of $H^*$
via $u$. Following \cite{bichon}, we say that $u$ is a sovereign
character of $H$. Then $f:A\ra A^{(S^2)}$, $f(a)=u^{-1}\cdot
a=\sum u^{-1}(a_1)a_0$, is an isomorphism of right $H$-comodule
algebras, and it induces an isomorphism of categories
$$F:\, _{A^{(S^2)}}{\cal M}^H \ra \, _A{\cal M}^H$$
where for $M\in \, _{A^{(S^2)}}{\cal M}^H$, $F(M)$ is just $M$,
with the same $H$-coaction, and $A$-action $*$ given by
$a*m=f(a)m$, for any $a\in A$ and $m\in M$. By restriction, this
induces an isomorphism of categories (and we denote it by $F$,
too)
$$F:\, _{A^{(S^2)}}{\cal M}^H_A \ra \, _A{\cal M}^H_A$$

Now $A^*\in \, _{A^{(S^2)}}{\cal M}^H_A$, so then $F(A^*)\in \,
_A{\cal M}^H_A$.

\begin{definition}
Let $H$ be a cosovereign Hopf algebra with $u$ as a sovereign
character. A finite dimensional right $H$-comodule algebra $A$ is
a symmetric algebra in the category ${\cal M}^H$ with respect to
$u$ if $F(A^*)\simeq A$ in the category $_A{\cal M}^H_A$. In this
case we simply say that $A$ is $(H,u)$-symmetric.
\end{definition}

Now we give equivalent characterizations of this property. The
next result can be derived from \cite[Proposition 4.6]{fuchs},
using the structure of duals in a category of corepresentations.
In our sketch of proof, explicit description is given for several
ways to describe symmetry of algebras of corepresentations.

\begin{proposition}
Let $A$ be a right $H$-comodule algebra, where $H$ is
a cosovereign Hopf algebra. Keeping the above notation, the following are equivalent.\\
(1) $A$ is $(H,u)$-symmetric.\\
(2) There exists a nondegenerate bilinear form $B:A\times A\ra k$
such that $B(b,ca)=B(bf(c),a)$, $B(b,a)=B(f(a),b)$, and
$B(b,h^*\cdot a)=B((h^*S)\cdot b,a)$ for any $a,b,c\in A, h^*\in H^*$.\\
(3) There exists a linear map $\lambda:A\ra k$ such that $\lambda
(ba)=\lambda (af(b))$ and $\lambda(h^*\cdot a)=h^*(1)\lambda(a)$
for any $a,b\in A,h^*\in H^*$, and also ${\rm Ker}\,\lambda$ does
not contain a non-zero right
ideal of $A$.\\
(4)  There exists a linear map $\lambda:A\ra k$ such that $\lambda
(ba)=\lambda (af(b))$ and $\lambda(h^*\cdot a)=h^*(1)\lambda(a)$
for any $a,b\in A,h^*\in H^*$, and also ${\rm Ker}\,\lambda$ does
not contain a non-zero
subobject of $A$ in ${\cal M}^H_A$.\\
More equivalent conditions can be added if we change right ideal
with left ideal in (3), and ${\cal M}^H_A$ with $_A{\cal M}^H$ in
(4).
\end{proposition}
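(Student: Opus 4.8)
The plan is to follow the route of \cite[Theorem 2.4]{dnn}, which already handles the Frobenius property of algebras in ${\cal M}^H$; the only new ingredient here is the twist $f$ produced by the sovereign character $u$. I would prove $(1)\Leftrightarrow(2)\Leftrightarrow(3)$ directly, inserting the $f$-relation at each stage and checking it is compatible with the $H$-colinearity, and then deduce $(3)\Leftrightarrow(4)$ together with the ``left ideal'' and ``$_A{\cal M}^H$'' variants from the corresponding equivalences in \cite[Theorem 2.4]{dnn}. One can alternatively read the statement off from \cite[Proposition 4.6]{fuchs} using the description of duals in ${\cal M}^H$, but the argument below has the virtue of making all the data explicit.

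For $(1)\Leftrightarrow(2)$ I would, given an isomorphism $\theta\colon A\ra F(A^*)$ in $_A{\cal M}^H_A$, put $B(a,b)=\theta(b)(a)$; since $\dim A<\infty$, $\theta$ is bijective iff $B$ is nondegenerate. The three structures of $F(A^*)$ then translate as follows. Right $A$-linearity of $\theta$ (for the usual right $A$-action on $A^*$) reads $B(b,ca)=B(ab,c)$; setting $b=1$ and $\lambda=B(1,-)$ this already gives $B(a,c)=\lambda(ca)$. Left $A$-linearity of $\theta$ involves the deformed action $a*m=f(a)m$ and reads $B(b,ca)=B(bf(c),a)$, the first identity of (2); combining it with the previous rewrite via $\lambda$ yields $\lambda(cb)=\lambda(bf(c))$, equivalently the self-duality identity $B(b,a)=B(f(a),b)$. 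Finally $H$-colinearity of $\theta$, through formula (\ref{formulaacth*}), reads $B(b,h^*\cdot a)=B((h^*S)\cdot b,a)$. For the converse, given $B$ as in (2) I would define $\theta$ by $\theta(b)(a)=B(a,b)$; nondegeneracy gives bijectivity, the first identity of (2) gives left $A$-linearity of $\theta$, the first and second together give right $A$-linearity (one first derives $B(XY,b)=B(Y,bX)$), and the third gives $H$-colinearity, so $\theta$ is an isomorphism in $_A{\cal M}^H_A$.

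For $(2)\Leftrightarrow(3)$, putting $\lambda(a)=B(1,a)$ one has $B(a,b)=\lambda(ba)$, and $B\leftrightarrow\lambda$ is a bijection between the data of (2) and of (3): nondegeneracy of $B$ corresponds to ``${\rm Ker}\,\lambda$ contains no nonzero right ideal'', the identity $B(b,a)=B(f(a),b)$ corresponds to $\lambda(ba)=\lambda(af(b))$, and $B(b,h^*\cdot a)=B((h^*S)\cdot b,a)$, evaluated at $b=1$ and using $(h^*S)\cdot 1=h^*(1)1$, corresponds to $\lambda(h^*\cdot a)=h^*(1)\lambda(a)$; conversely this last relation upgrades back to the $H$-colinearity of $B$ exactly as in \cite[Theorem 2.4]{dnn}. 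The reverse direction just reverses these steps, starting from $\lambda$ with $B(a,b)=\lambda(ba)$.

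Once (3) holds, $\lambda$ satisfies $\lambda(h^*\cdot a)=h^*(1)\lambda(a)$ and hence is a Frobenius functional for $A$ in ${\cal M}^H$; so by \cite[Theorem 2.4]{dnn} the conditions ``${\rm Ker}\,\lambda$ contains no nonzero right ideal / left ideal / subobject in ${\cal M}^H_A$ / subobject in $_A{\cal M}^H$'' are all equivalent, which yields $(3)\Leftrightarrow(4)$ and the final remark; the extra hypothesis $\lambda(ba)=\lambda(af(b))$ plays no role here and is simply carried along. I expect the only real difficulty to be bookkeeping: correctly tracking the various left/right module and comodule structures on $A$, $A^*$, $A^{(S^2)}$ and $F(A^*)$, and in particular the side on which $f$ (equivalently $u$) twists, so that the identities in (2)--(4) come out in exactly the stated form; with the conventions fixed, the verifications are routine.
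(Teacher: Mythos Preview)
Your proposal is correct and follows essentially the same approach as the paper's own proof: both set $B(a,b)=\theta(b)(a)$, translate left/right $A$-linearity and $H$-colinearity of $\theta$ into the identities (\ref{B1}), (\ref{B2}), (\ref{B3}) (noting that any two of these give the third), pass to $\lambda(a)=B(1,a)$ with $B(a,b)=\lambda(ba)$, and invoke \cite[Theorem 2.4]{dnn} for the equivalence of the various nondegeneracy conditions on $\lambda$. Your write-up is in fact a bit more explicit than the paper's sketch, but the route is the same.
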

\begin{proof}
We combine the proof of the equivalent characterizations of a
symmetric algebra in the category of vector spaces, see
\cite[Theorem 16.54]{lam}, and \cite[Theorem 2.4]{dnn}, recalled
above. Thus for $(1)\Leftrightarrow (2)$, if $\theta:A\ra F(A^*)$
is a linear map, then let $B:A\times A\ra k$ be the bilinear map
defined by $B(a,b)=\theta (b)(a)$. Then it is straightforward to
check that $\theta$ is left $A$-linear if and only if
\begin{equation} \label{B1}
B(b,ca)=B(bf(c),a) \;\;\;\mbox{ for any }a,b,c\in A,
\end{equation}
and $\theta$ is right $A$-linear if and only if
\begin{equation} \label{B2}
B(b,ac)=B(cb,a) \;\;\;\mbox{ for any }a,b,c\in A.
\end{equation}
We see that if (\ref{B1}) and (\ref{B2}) hold, then
$B(b,a)=B(b,a1)=B(bf(a),1)=B(f(a),b)$, thus
\begin{equation} \label{B3}
B(b,a)=B(f(a),b) \;\;\;\mbox{ for any }a,b\in A,
\end{equation}
Moreover, if (\ref{B1}) and (\ref{B3}) hold, then
$B(b,ac)=B(f(ac),b)=B(f(a)f(c),b)=B(f(a),cb)=B(cb,a)$, so
(\ref{B2}) holds. \\
We have that $\theta$ is $H$-colinear if and only if $B(b,h^*\cdot
a)=B((h^*S)\cdot b,a)$ for any $a,b\in A$, $h^*\in H^*$, and
$\theta$ is bijective if and only if $B$ is non-degenerate, thus
$(1)\Leftrightarrow (2)$ is clear.

For $(1)\Leftrightarrow (3)$, $\lambda$ and $B$ determine each
other by the relations $\lambda(a)=B(1,a)$ for any $a\in A$,
respectively $B(a,b)=\lambda (ba)$ for any $a,b\in A$.
\end{proof}

\begin{example}
1) If $H=kG$, the group Hopf algebra of a group $G$, then
$S^2=Id$, so $H$ is cosovereign with $\eps$ as a sovereign
character. A right $H$-comodule algebra is just a $G$-graded
algebra $A$, and $A$ is $(H,\eps)$-symmetric if and only if $A$ is
graded symmetric in the sense of \cite[Section 5]{dnn}.\\
2) More generally, if $H$ is an involutory Hopf algebra, i.e.
$S^2=Id$, then $H$ is obviously cosovereign with $\eps$ as a
sovereign character. In this case, if $A$ is a finite dimensional
algebra in ${\cal M}^H$, then $A^*\in \, _A{\cal M}^H_A$, so
$F(A^*)$ is just $A^*$, with the usual left and right $A$-actions.
Thus $A$ is $(H,\eps)$-symmetric if and only if $A^*\simeq A$ in
$_A{\cal M}^H_A$.
\end{example}

\begin{remark}
The definition of symmetry depends on the cosovereign character.
Thus it is possible that a cosovereign Hopf algebra $H$ has two
sovereign characters $u$ and $v$, and a right $H$-comodule algebra
$A$ is $(H,u)$-symmetric, but not $(H,v)$-symmetric. Indeed, let
$H=kC_2$, where $C_2=\{ e,g\}$ is a group of order 2 ($e$ is the
neutral element), and the characteristic of $k$ is $\neq 2$. Let
$A$ be a commutative $C_2$-graded division algebra with support
$C_2$; for example one can take $A=kC_2$. Then $H$ is involutory,
so it is a cosovereign Hopf algebra with two possible sovereign
characters $\eps=p_e+p_g$ and $u=p_e-p_g$, where $\{ p_e,p_g\}$ is
the basis of $H^*$ dual to the basis $\{ e,g\}$ of $H$. We have $u^2=\eps$, so $u^{-1}=u$.\\
It is easy to see that $A$ is $(H,\eps)$-symmetric, i.e. graded
symmetric in the terminology of \cite{dnn}, for example by using
the results of \cite{dnn2}, where the question whether any graded
division algebra is graded symmetric is addressed.\\
On the other hand, $A$ is not $(H,u)$-symmetric. Indeed, let
$\lambda:A\ra k$ be a linear map such that
$\lambda(ab)=\lambda(b(u\cdot a))$ for any $a,b\in A$. We have
$u\cdot a=a$ for any $a\in A_e$ (the homogeneous component of
degree $e$ of $A$), and $u\cdot a=-a$ for any $a\in A_g$. Then for
$b=1$ and $a\in A_g$ we get $\lambda(a)=0$, thus $\lambda(A_g)=0$.
Also, for $a,b\in A_g$ we obtain $\lambda(ab)=0$, and since
$A_gA_g=A_e$, this shows that $\lambda(A_e)=0$. Thus $\lambda$
must be zero.
\end{remark}

\begin{remark}\label{remarcaH4}
It is obvious that a graded symmetric algebra is symmetric as a
$k$-algebra. More general, if $H$ is involutory, then a
$(H,\eps)$-symmetric algebra is symmetric as a $k$-algebra.
However, for an arbitrary cosovereign Hopf algebra $H$ with
sovereign character $u$, if $A$ is $(H,u)$-symmetric, then $A$ is
not necessarily symmetric as a $k$-algebra, as we show in the
following example.

 Let $H=H_4$, the 4-dimensional Sweedler's Hopf
algebra. It is presented by algebra generators $c$ and $x$,
subject to relations
$$c^2=1, x^2=0, xc=-cx$$

The coalgebra structure is defined by
$$\Delta(c)=c\ot c, \Delta(x)=c\ot x+x\ot 1, \eps(c)=1, \eps(x)=0.$$

The antipode $S$ satisfies $S(c)=c, S(x)=-cx$, thus $S^2(x)=-x$.
Apart from $\eps$, $H$ has just one more character $\alpha$, given
by $\alpha(c)=-1, \alpha(x)=0$; it acts on the basis elements of
$H$ by
\begin{equation} \label{actionu}
\alpha\cdot 1=1,\; \alpha\cdot c=-c, \; \alpha\cdot x=x,\;
\alpha\cdot (cx)=-cx
\end{equation}

 $H$ is
cosovereign, and the only sovereign character is $\alpha$. We show
that the linear map $\lambda:H\ra k$,
$\lambda(1)=\lambda(c)=\lambda(cx)=0$, $\lambda(x)=1$ makes $H$ an
$(H,\alpha)$-symmetric algebra. Indeed, we first see by a
straightforward checking that $\lambda$ is right $H$-colinear (or
equivalently, left $H^*$-linear). Next, an easy computation using
$(\ref{actionu})$ shows that any element of the form
$ba-a(\alpha\cdot b)$ lies in the span of $1,c$ and $cx$, thus
$\lambda(ba)=\lambda(a(\alpha\cdot b))$ for any $a,b\in H$.

Finally, let $I$ be a left ideal of $H$ contained in ${\rm Ker}
\lambda$. Let $z=\delta 1+\beta c+\gamma cx\in I$, where
$\delta,\beta,\gamma\in k$. Then $cz=\delta c+\beta 1+\gamma x\in
I\subseteq <1,c,cx>$, so $\gamma=0$. Then $xz=\delta x-\beta cx\in
I\subseteq <1,c,cx>$, so $\delta=0$. Now $cxz=-\beta x\in
I\subseteq <1,c,cx>$, so $\beta$ must be zero, too. Thus $z=0$,
and $H$ is $(H,\alpha)$-symmetric. This will also follow from
Proposition \ref{Hopfsimetric}.

 On
the other hand, $H$ is not symmetric as a $k$-algebra. Indeed, if
$\lambda:H\ra k$ is a linear map such that
$\lambda(ab)=\lambda(ba)$ for any $a,b\in H$, then
$\lambda(cx)=\lambda(xc)=-\lambda(cx)$, so $\lambda(cx)=0$, and
$\lambda(x)=\lambda(ccx)=\lambda(cxc)=-\lambda(x)$, so
$\lambda(x)=0$. Thus the two-sided ideal $<x,cx>$ of $H$ is
contained in ${\rm Ker}\lambda$, showing that $H$ is not
symmetric. This can also be seen from a general result saying that
a Hopf algebra is symmetric as an algebra if and only if it is
unimodular (i.e. the spaces of left integrals and right integrals
in $H$ coincide) and $S^2$ is inner, see \cite{os}; for $H_4$ the
square of the antipode is inner, but the unimodularity condition
fails.
\end{remark}

Now we explain how examples of $(H,u)$-symmetric algebras in the
category ${\cal M}^H$ can be constructed, where $H$ is a
cosovereign Hopf algebra with sovereign character $u$. We recall
that for any algebra $A$ (in the category of vector spaces), and
any left $A$, right $A$-bimodule $M$, one can construct an algebra
structure on the space $A\oplus M$ with the multiplication defined
by $(a,m)(a',m')=(aa',am'+ma')$; this is called the trivial
extension of $A$ and $M$. The unit of this algebra is $(1,0)$. If
$M=A^*$ with the usual $A,A$-bimodule structure, the trivial
extension of $A$ and $A^*$ is simply called the trivial extension
of $A$, and it is a symmetric algebra, see \cite[Example
16.60]{lam}.

\begin{proposition} \label{trivialextension}
Let $A$ be a right $H$-comodule algebra, where $H$ is cosovereign
with sovereign character $u$. Then ${\cal E}(A)=A\oplus F(A^*)$,
with the direct sum structure of a right $H$-comodule,  and the
algebra structure obtained by the trivial extension of $A$ and the
$A,A$-bimodule $F(A^*)$, is a right $H$-comodule algebra which is
$(H,u)$-symmetric.
\end{proposition}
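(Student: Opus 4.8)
The plan is to recognize ${\cal E}(A)$ as an algebra in ${\cal M}^H$ and then to produce a linear functional on it satisfying condition (3) of the previous Proposition, for the comodule algebra ${\cal E}(A)$ and its own twist map.

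First I would check that ${\cal E}(A)$, with the direct sum of the coaction of $A$ and the coaction of $A^*$ (recall that $F$ leaves the coaction unchanged), is a right $H$-comodule algebra. Associativity and unitality of the trivial extension product $(a,\phi)(a',\phi')=(aa',\,f(a)\rightharpoonup\phi'+\phi\leftharpoonup a')$ are the usual facts about trivial extensions, valid because $F(A^*)$ is an honest $A,A$-bimodule; here $(a\rightharpoonup\phi)(b)=\phi(ba)$ and $(\phi\leftharpoonup a)(b)=\phi(ab)$ are the standard actions, and $f(a)\rightharpoonup\phi$ is the deformed left action defining $F(A^*)$. Colinearity of the product splits into: colinearity of the product of the comodule algebra $A$; colinearity of the left $A$-action, which is exactly $F(A^*)\in\, _A{\cal M}^H$; colinearity of the right $A$-action, which is $F(A^*)\in{\cal M}^H_A$; and the trivially colinear component $F(A^*)\ot F(A^*)\to F(A^*)$, which is zero. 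Since $\rho(1,0)=(1,0)\ot 1$, this makes ${\cal E}(A)$ a finite dimensional algebra in ${\cal M}^H$.

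Next I would take $\lambda:{\cal E}(A)\ra k$, $\lambda(a,\phi)=\phi(1)$, and verify the three requirements of part (3) of the Proposition for ${\cal E}(A)$ and its twist isomorphism $f_{\cal E}$. The direct sum form of the coaction gives $f_{\cal E}(a,\phi)=u^{-1}\cdot(a,\phi)=(f(a),\,u^{-1}\cdot\phi)$, where $u^{-1}\cdot\phi$ is the $H^*$-action on $A^*$. Left $H^*$-linearity with the counit twist, $\lambda(h^*\cdot(a,\phi))=h^*(1)\lambda(a,\phi)$, follows immediately from $(\ref{formulaacth*})$ together with $\rho_A(1)=1\ot 1$ and $S(1)=1$. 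That ${\rm Ker}\,\lambda$ contains no non-zero right ideal follows from a short nondegeneracy argument: if $(a,\phi)$ lies in such an ideal, multiplying it on the right by the elements $(b,0)$ forces $\phi=0$, and then multiplying $(a,0)$ on the right by the elements $(0,\psi)$ forces $\psi(f(a))=0$ for all $\psi\in A^*$, so $f(a)=0$ and $a=0$.

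The step that requires care is the trace-type identity $\lambda(yx)=\lambda(xf_{\cal E}(y))$: expanding both sides with the actions above, for $x=(a,\phi)$ and $y=(b,\psi)$ both sides reduce to $\phi(f(b))+\psi(a)$ once one has the identity
$$(u^{-1}\cdot\phi)(f(a))=\phi(a).$$
This is where the cosovereign hypothesis enters. By $(\ref{formulaacth*})$, $(u^{-1}\cdot\phi)(c)=\phi((u^{-1}\circ S)\cdot c)$; now $u\circ S$ is the convolution inverse of the grouplike element $u$ of $H^*$, and is itself grouplike, so its convolution inverse $u\circ S^2$ equals $u$, whence $u^{-1}\circ S=u\circ S^2=u$ as functionals, giving $(u^{-1}\cdot\phi)(c)=\phi(u\cdot c)$. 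Combining this with $u\cdot f(a)=u\cdot(u^{-1}\cdot a)=(u\ast u^{-1})\cdot a=\eps\cdot a=a$ yields the displayed identity. I expect this interplay — the $u^{-1}$-twist built into $f_{\cal E}$ on the $A^*$-summand against the $u^{-1}$-twist defining $f$ on $A$, and the verification via $u\circ S^2=u$ that the two cancel — to be the only nontrivial point; everything else is routine trivial-extension bookkeeping. Once condition (3) is established, the previous Proposition gives that ${\cal E}(A)$ is $(H,u)$-symmetric.
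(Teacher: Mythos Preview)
Your proof is correct and follows essentially the same approach as the paper: the same functional $\lambda(a,\phi)=\phi(1)$ is used, and the key identity $(u^{-1}\cdot\psi)(u^{-1}\cdot a)=\psi(a)$ is established in both via $u^{-1}\circ S=u\circ S^2=u$ (the paper writes this step as $(u^{-1}S)\cdot(u^{-1}\cdot a)=u\cdot(u^{-1}\cdot a)=a$). Your treatment of the comodule-algebra axiom by decomposing into the known compatibilities of $F(A^*)\in\,_A{\cal M}^H_A$ is slightly more conceptual than the paper's direct verification, but the substance is the same.
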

\begin{proof}
The multiplication of ${\cal E}(A)$ is given by
$$(a,a^*)(b,b^*)=(ab,a*b^*+a^*b)=(ab,(u^{-1}\cdot a)b^*+a^*b)$$ for any $a,b\in A$ and any
$a^*,b^*\in A^*$.

We first see that ${\cal E}(A)$ is a right $H$-comodule algebra.
Indeed, the $H$-coaction on ${\cal E}(A)$ is  $\rho:{\cal E}(A)\ra
{\cal E}(A)\ot H$, given by
$$\rho (a,a^*)=\sum (a_0,0)\ot a_1 + \sum (0,a^*_0)\ot a^*_1$$
Then \bea \rho((a,a^*)(b,b^*))&=&\rho (ab,a*b^*+a^*b)\\
&=&\sum ((ab)_0,0)\ot (ab)_1 + \sum (0,(a*b^*)_0)\ot (a*b^*)_1 +
\sum (0, (a^*b)_0)\ot (a^*b)_1\\
&=&\sum (a_0b_0,0)\ot a_1b_1 + \sum (0,a_0*b_0^*)\ot a_1b_1^*
+\sum
(0,a_0^*b_0)\ot a_1^*b_1\\
&=&(\sum (a_0,0)\ot a_1 + \sum (0,a^*_0)\ot a^*_1)(\sum (b_0,0)\ot
b_1 + \sum (0,b^*_0)\ot b^*_1)\\
&=& \rho (a,a^*)\rho (b,b^*)\eea

Let $\lambda:{\cal E}(A)\ra k$ be the linear map defined by
$\lambda(a,a^*)=a^*(1)$ for any $a\in A, a^*\in A^*$. Then
 \bea
\lambda (h^*\cdot (a,a^*))&=&(h^*\cdot
a^*)(1)\\
&=&a^*((h^*S)\cdot 1) \;\;\;\; (\mbox{by }(\ref{formulaacth*}))\\
&=&a^*((h^*S)(1)1)\\
&=&h^*(1)a^*(1)\\
&=&h^*(1)\lambda(a,a^*)\eea

Now \bea \lambda((a,a^*)(u^{-1}\cdot (b,b^*)))&=&\lambda
((a,a^*)(u^{-1}\cdot b,u^{-1}\cdot b^*))\\
&=&\lambda (a(u^{-1}\cdot b),(u^{-1}\cdot a)(u^{-1}\cdot b^*)+a^*(u^{-1}\cdot b))\\
&=&(u^{-1}\cdot b^*)(u^{-1}\cdot a)+a^*(u^{-1}\cdot b)\\
&=&b^*((u^{-1}S)\cdot (u^{-1}\cdot a))+a^*(u^{-1}\cdot b)\;\;\;\; (\mbox{by }(\ref{formulaacth*}))\\
&=&b^*(u\cdot (u^{-1}\cdot a))+a^*(u^{-1}\cdot b)\\
&=&b^*(a)+a^*(u^{-1}\cdot b)\\
&=&\lambda (ba,(u^{-1}\cdot
b)a^*+b^*a)\\
&=&\lambda ((b,b^*)(a,a^*))\eea

Finally, we see that ${\rm Ker}\lambda$ does not contain non-zero
right ideals. Indeed, if $\lambda((a,a^*){\cal E}(A))=0$, then
$b^*(u^{-1}\cdot a)+a^*(b)=0$ for any $b\in A, b^*\in A^*$. If we
take $b^*=0$, we get that $a^*(b)=0$ for any $b$, so $a^*=0$. Then
$b^*(u^{-1}\cdot a)=0$ for any $b^*$, so $u^{-1}\cdot a=0$,
showing that $a=0$.

 We conclude that $\lambda$ makes
${\cal E}(A)$ an $(H,u)$-symmetric algebra.
\end{proof}

We note that the previous result shows that any finite dimensional
algebra in the category ${\cal M}^H$, where $H$ is cosovereign via
$u$, is a subalgebra of an $(H,u)$-symmetric algebra, and also a
quotient of an $(H,u)$-symmetric algebra. The construction in
Proposition \ref{trivialextension} helps us to provide more
examples of algebras that are symmetric in categories of
corepresentations with respect to certain characters, but which
are not symmetric as $k$-algebras.

\begin{example}
Assume that $k$ has characteristic $\neq 2$, and let $H=H_4$ be
Sweedler's Hopf algebra. Let $A=k[X]/(X^2)$, a 2-dimensional
algebra, with basis $\{ 1,X\}$, and relation $X^2=0$. Let $c$ and
$x$ be the endomorphisms of the space $A$ such that $c(1)=1$,
$c(X)=-X$, $x(1)=0$, $x(X)=1$. Then
 $c^2=Id, x^2=0$ and $xc=-cx$. Moreover, $c$ is an algebra automorphism of
 $A$, and it is easy to check that $x(ab)=c(a)x(b)+x(a)b$ for any
 $a,b\in A$, thus $A$ is a left $H$-module algebra with the
 actions of $c$ and $x$ given by the endomorphisms above, i.e.
 $$c\cdot 1=1,\; c\cdot X=-X,\; x\cdot 1=0,\; x\cdot X=1.$$
 The dual space $A^*$ has a left $H$-action given by $(h\cdot
 a^*)(a)=a^*(S(h)\cdot a)$ for any $h\in H, a^*\in A^*$ and $a\in
 A$. If we consider the basis $\{ p_1,p_X\}$ of $A^*$ dual to the
 basis $\{1,X\}$, this action explicitly writes
 $$c\cdot p_1=p_1,\; c\cdot p_X=-p_X,\; x\cdot p_1=-p_X,\; x\cdot
 p_X=0.$$
 On the other hand, $A^*$ has usual left $A$-module structure
 given by
 $$1p_1=p_1, \; 1p_X=p_X, \; Xp_1=0,\; Xp_X=p_1,$$
 and usual right $A$-module structure given by
$$p_11=p_1, \; p_X1=p_X, \; p_1X=0,\; p_XX=p_1.$$
 $A$ is also a right comodule algebra over the dual
 Hopf algebra $H^*$. Since $H^*$ is cosovereign with sovereign character $c$ (via the isomorphism $H\simeq H^{**}$),
the associated left $A$-module structure on $F(A^*)$ is given by
$a*a^*=(c\cdot a)a^*$ for any $a\in A,a^*\in A^*$. Thus
$$1*p_1=p_1,\; 1*p_X=p_X,\; X*p_1=0,\; X*p_X=-p_1$$
The we can consider the algebra ${\cal E}(A)=A\oplus F(A^*)$,
whose basis is $\{ 1,X,p_1,p_X\}$, and multiplication induced by
$$X^2=p_1^2=p_X^2=p_1p_X=p_Xp_1=0$$
$$X*p_1=0,\; X*p_X=-p_1,\; p_1X=0,\; p_XX=p_1$$
If we denote $u=X$ and $v=p_X$, we can present ${\cal E}(A)$ by
generators $u,v$, subject to relations $u^2=v^2=0, vu=-uv$. The
left $H$-module structure of ${\cal E}(A)$ is given by $c\cdot
u=-u, c\cdot v=-v, x\cdot u=1, x\cdot v=0$. If we denote by $\{
P_1,P_c,P_x,P_{cx}\}$ the basis of $H^*$ dual to the standard
basis $\{ 1,c,x,cx\}$ of $H$, the right $H^*$-comodule structure
of ${\cal E}(A)$ is given by \bea u&\mapsto&u\ot P_1+c\cdot u\ot
P_c+ x\cdot u\ot P_x +(cx)\cdot
u\ot P_{cx}\\
&=&u\ot (P_1-P_c)+1\ot (P_x+P_{cx})\\
v&\mapsto &v\ot (P_1-P_c)\eea Since the Hopf algebra $H$ is
selfdual, a Hopf algebra isomorphism being given by $1\mapsto
P_1+P_c$, $c\mapsto P_1-P_c$, $x\mapsto P_x-P_{cx}$, $cx\mapsto
-P_x-P_{cx}$, we can regard $A$ as a right $H$-comodule algebra.
Summarizing, ${\cal E}(A)$ is the algebra with generators $u,v$,
relations
$$u^2=v^2=0, vu=-uv$$
and $H$-comodule structure given by
$$u\mapsto u\ot c-1\ot cx,\; v\mapsto v\ot c$$
By Proposition \ref{trivialextension}, ${\cal E}(A)$ is
$(H,\alpha)$-symmetric, where $\alpha=P_1-P_c$ is the
distinguished grouplike element of $H^*$.\\
On the other hand, ${\cal E}(A)$ is not symmetric as a
$k$-algebra. Indeed, if $\lambda:{\cal E}(A)\ra k$ is a linear map
such that $\lambda (zz')=\lambda (z'z)$ for any $z,z'\in {\cal
E}(A)$, then $\lambda(uv)=\lambda(vu)=-\lambda(uv)$, thus
$\lambda(uv)=0$. But the 1-dimensional space spanned by $uv$ is a
two-sided ideal of ${\cal E}(A)$, so ${\rm Ker}\lambda$ contains a
non-zero ideal.
\end{example}

\section{Frobenius smash products} \label{sectionFrobenius}

Let $A$ be an algebra in ${\cal M}^H$, where $H$ is a finite
dimensional Hopf algebra. Then $A$ is a left $H^*$-module algebra
and we can consider the smash product $A\# H^*$, which is an
algebra with multiplication given by
$$(a\# h^*)(b\# g^*)=\sum a(h^*_1\cdot b)\# h^*_2g^*$$
It is known that $A$ is a Frobenius algebra if and only if so is
$A\# H^*$, see \cite{bergen}.

On the other hand, $A\# H^*$ is an algebra in the category ${\cal
M}^{H^*}$, with the $H^*$-coaction induced by the comultiplication
of $H^*$, i.e. $a\# h^*\mapsto \sum a\# h^*_1\ot h^*_2$. The aim
of this section is to discuss the connection between $A$ being a
Frobenius  algebra in ${\cal M}^H$, and $A\# H^*$ being a
Frobenius  algebra in ${\cal M}^{H^*}$.

We consider the usual left and right actions of $H^*$ on $H$,
$h^*\rightharpoonup h=\sum h^*(h_2)h_1$ and $h\leftharpoonup
h^*=\sum h^*(h_1)h_2$, and the usual left and right actions of $H$
on $H^*$, denoted by $h\rightharpoonup h^*$ and $h^*\leftharpoonup
h$, where $h\in H$ and $h^*\in H^*$. $H$ also acts on $A\# H^*$ by
$h\rightharpoonup (a\# h^*)=a\# (h\rightharpoonup h^*)$.

We recall that a right (respectively left) integral in $H$ is an
element $t\in H$ such that $th=\eps (h)t$ (respectively
$ht=\eps(h)t$) for any $h\in H$, and a left integral on $H$ is an
element $T\in H^*$ such that $h^*T=h^*(1)T$ for any $h^*\in H^*$.

\begin{theorem} \label{teoremaFrobenius}
Let $H$ be a finite dimensional Hopf algebra, and let $A$ be a
finite dimensional right $H$-comodule algebra which is a Frobenius
algebra in the category ${\cal M}^H$. Then the smash product $A\#
H^*$ is a Frobenius algebra in the category ${\cal M}^{H^*}$.
\end{theorem}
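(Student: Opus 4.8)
The plan is to use the characterization of the Frobenius property in ${\cal M}^H$ recalled before Proposition 2.2: $A$ is Frobenius in ${\cal M}^H$ exactly when there is a linear map $\lambda:A\to k$ with $\lambda(h^*\cdot a)=h^*(1)\lambda(a)$ for all $h^*\in H^*$, $a\in A$, whose kernel contains no nonzero right ideal of $A$. I want to produce, out of such a $\lambda$, a corresponding Frobenius form $\Lambda:A\# H^*\to k$ for the category ${\cal M}^{H^*}$. By the same theorem applied to the $H^*$-comodule algebra $A\# H^*$, I must find $\Lambda$ that is $H$-linear in the appropriate sense --- i.e.\ $\Lambda(h\rightharpoonup z)=\eps(h)\Lambda(z)$ for $z\in A\# H^*$, $h\in H$ (this is the $H^{**}$-module condition coming from the $H^*$-coaction, using the action $h\rightharpoonup(a\# h^*)=a\#(h\rightharpoonup h^*)$) --- and whose kernel contains no nonzero right ideal of $A\# H^*$.

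First I would fix a nonzero right integral $t\in H$; since $H$ is finite dimensional, $t$ exists and is unique up to scalar, and there is a distinguished grouplike $\alpha\in H^*$ with $th=\alpha(h)t$ replaced by its left version where needed. The natural candidate is
\[
\Lambda(a\# h^*)=\lambda(a)\,h^*(t),
\]
i.e.\ $\lambda$ tensored with "evaluation at the integral''. The first step is to check the $H$-linearity condition: $\Lambda(h\rightharpoonup(a\# h^*))=\lambda(a)(h\rightharpoonup h^*)(t)=\lambda(a)\,h^*(\,?\,)$, and one computes $(h\rightharpoonup h^*)(t)=h^*(t\leftharpoonup h)$; using that $t$ is an integral, $t\leftharpoonup h$ collapses to $\eps(h)$ times $t$ (up to the distinguished grouplike, which one arranges to be trivial by choosing the side of the integral correctly, or absorbs into a twist of $\lambda$). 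This should give exactly $\eps(h)\Lambda(a\# h^*)$.

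The second and main step is to show ${\rm Ker}\,\Lambda$ contains no nonzero right ideal of $A\# H^*$. Here I would suppose $z=\sum_i a_i\# h^*_i\in A\# H^*$ (with the $h^*_i$ linearly independent) generates a right ideal inside ${\rm Ker}\,\Lambda$, so $\Lambda(z(b\# g^*))=0$ for all $b\in A,\ g^*\in H^*$. Expanding the smash product multiplication, $z(b\# g^*)=\sum_i a_i(h^*_{i,1}\cdot b)\# h^*_{i,2}g^*$, and applying $\Lambda$ gives $\sum_i \lambda\big(a_i(h^*_{i,1}\cdot b)\big)(h^*_{i,2}g^*)(t)=0$. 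Letting $g^*$ range over all of $H^*$ and using that the pairing $(g^*,h^*)\mapsto (h^*g^*)(t)$ is nondegenerate on $H^*$ (a standard consequence of $t$ being a nonzero integral --- this is the Hopf-algebraic input, essentially the statement that $H^*$ is Frobenius with form $g^*\mapsto g^*(t)$), I can isolate each slot and conclude $\lambda(a_i(h^*_{i,1}\cdot b))=0$ for all $b$ and all $i$; a further separation using linear independence of the $h^*_i$ and the coalgebra structure forces $\lambda(a_i\cdot(\text{anything in the right }H^*\text{-action orbit}))$, hence $\lambda(a_i A)=0$ since $1\in H^*$ acts, wait --- more carefully, taking $h^*$ suitably and using $\eps\cdot b=b$ one gets $\lambda(a_i b)=0$ for all $b\in A$, so each $a_i A\subseteq{\rm Ker}\,\lambda$; by hypothesis $a_i=0$, hence $z=0$.

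The step I expect to be the genuine obstacle is the disentangling in the second part: the smash-product multiplication mixes the $A$-factor and the $H^*$-factor through the $H^*$-action on $A$, so after pairing against $g^*$ one is left with an identity involving $\sum_i \lambda(a_i(h^*_{i,1}\cdot b))$ evaluated against $h^*_{i,2}$, and decoupling "$\lambda(a_i b)=0$'' from this requires a careful use of the nondegeneracy of the integral pairing together with the counit axiom to kill the action twist. Once the bookkeeping with Sweedler indices and the distinguished grouplike element is done correctly, the rest is formal. I would also double-check at the outset which side (left vs.\ right) integral makes the $H$-linearity come out with $\eps$ rather than with a grouplike twist; if a twist appears, it can be removed by replacing $\lambda$ by $\alpha\rightharpoonup\lambda$ or the analogous modification, exactly as in \cite{bergen}.
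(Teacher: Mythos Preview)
Your construction of the Frobenius form $\Lambda(a\# h^*)=\lambda(a)\,h^*(t)$ with $t$ a nonzero right integral, and your verification of the $H$-linearity $\Lambda(h\rightharpoonup z)=\eps(h)\Lambda(z)$, coincide with what the paper does. The divergence is entirely in the nondegeneracy step, and that is where your sketch has a genuine gap.

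After you use nondegeneracy of the integral pairing in the variable $g^*$, what you actually obtain is the single relation
\[
\sum_i \lambda\big(a_i(h^*_{i,1}\cdot b)\big)\,h^*_{i,2}=0 \quad\text{in }H^*,\ \text{for every }b\in A,
\]
not the separated equations $\lambda(a_i(h^*_{i,1}\cdot b))=0$ for each $i$. Your proposed ``further separation using linear independence of the $h^*_i$'' does not go through: linear independence of the $h^*_i$ says nothing about linear independence of the Sweedler legs $h^*_{i,2}$, and these are moreover coupled to the first legs through the $H^*$-action on $b$. Setting $b=1$ does give $\lambda(a_i)=0$ for each $i$, but pushing on to $\lambda(a_iA)=0$ requires exactly the untwisting you flag as the obstacle, and nothing in your outline supplies it.

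The paper sidesteps this entirely. Instead of attacking characterization~(3) (no nonzero right ideal in $\mathrm{Ker}\,\Lambda$), it checks the equivalent characterization~(4): no nonzero subobject of $A\# H^*$ in ${\cal M}^{H^*}_{A\# H^*}$ lies in $\mathrm{Ker}\,\Lambda$. The extra hypothesis that such an $I$ is also an $H^*$-subcomodule lets one invoke the Weak Structure Theorem for the Hopf--Galois extension $A\subset A\# H^*$, which forces $I=J\# H^*$ with $J=I^{co\,H^*}$ a right ideal of $A$. Then $\Lambda(J\# H^*)=0$ immediately gives $\lambda(J)=0$, hence $J=0$ and $I=0$. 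This structural reduction to coinvariants is the key idea your direct computation is missing; it is precisely what decouples the $A$-part from the $H^*$-part without any Sweedler bookkeeping.
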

\begin{proof}
Let $\lambda:A\ra k$ be a linear map whose kernel does not contain
non-zero right ideals of $A$, and such that $\lambda (h^*\cdot
a)=h^*(1)\lambda (a)$ for any $h^*\in H^*$ and $a\in A$.  Let $t$
be a non-zero right integral in $H$. Define a linear map
$\overline{\lambda}:A\#H^*\ra k$ such that
$$\overline{\lambda}(a\# h^*)=\lambda(a)h^*(t) \mbox{   for any
}a\in A, h^*\in H^*$$

We have that $\overline{\lambda}(h\cdot
z)=\varepsilon(h)\overline{\lambda}(z)$ for any $h\in H$ and $z\in
A\# H^*$. Indeed \bea \overline{\lambda}(h\cdot (a\# h^*))&=&\sum
\overline{\lambda}(a\#h^*_2(h)h^*_1)\\
&=& \sum h^*_2(h)\lambda(a)h^*_1(t)\\
&=&\lambda (a)h^*(th)\\
&=&\lambda(a)\varepsilon (h)h^*(t)\\
&=&\varepsilon (h)\overline{\lambda}(a\# h^*) \eea

We show that $Ker(\overline{\lambda})$ does not contain non-zero
subobjects of $A\# H^*$ in the category ${\cal M}_{A\#
H^*}^{H^*}$. Let $I$ be a right ideal of $A\# H^*$ which is also a
right $H^*$-subcomodule (or equivalently, invariant with respect
to the induced left $H$-action on $A\# H^*$) such that $I\subset
Ker(\overline{\lambda})$. We know that $(A\# H^*)^{co\; H^*}=A\#
1\simeq A$ and $A\# H^*/A$ is a right $H^*$-Galois extension, see
\cite[Example 6.4.8]{DNR}. Then $J=I^{co\; H^*}$ is a right ideal
of $A$ and the Weak Structure Theorem for Hopf-Galois extensions
shows that the map
$$J\ot_A(A\# H^*)\ra I,\;\; m\ot z\mapsto mz$$
is an isomorphism in the category ${\cal M}_{A\# H^*}^{H^*}$, see
\cite[Theorem 6.4.4]{DNR}. In particular $I=(J\# 1)(A\# H^*)=J\#
H^*$. Since $\overline{\lambda}(I)=0$, we see that $\lambda
(J)=0$, so $J=0$.
We conclude that $I$ must be zero.\\

\end{proof}

\begin{corollary}
A finite dimensional Hopf algebra $H$ is Frobenius in the category
${\cal M}^H$.
\end{corollary}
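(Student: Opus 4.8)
The plan is to read this off Theorem~\ref{teoremaFrobenius} by applying it to the dual Hopf algebra, using that a finite dimensional $H$ is recovered by dualizing twice. Since $H$ is finite dimensional, $H^*$ is a finite dimensional Hopf algebra and the evaluation pairing gives a Hopf algebra isomorphism $(H^*)^*\simeq H$, hence an isomorphism of monoidal categories ${\cal M}^{(H^*)^*}\simeq {\cal M}^H$. So it suffices to realize $H$, with the comodule algebra structure given by $\Delta$, as a smash product coming from an algebra of corepresentations over $H^*$.

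First I would take $A=k$ with the trivial right $H^*$-comodule algebra structure $1\mapsto 1\ot 1_{H^*}$; this is the unit object of ${\cal M}^{H^*}$, and it is a Frobenius algebra there, for instance by criterion $(3)$ of the characterization of the Frobenius property recalled above with $\lambda=\mathrm{id}_k$: the induced action of $(H^*)^*$ on $k$ is through the counit, so $\lambda(\varphi\cdot 1)=\varphi(1)\lambda(1)$ holds trivially, and ${\rm Ker}\,\lambda=0$ contains no non-zero right ideal.

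Next I would apply Theorem~\ref{teoremaFrobenius} with $H$ replaced by $H^*$ and $A$ by $k$, obtaining that $k\#(H^*)^*$ is a Frobenius algebra in ${\cal M}^{(H^*)^*}$. It remains to identify this object: in the multiplication rule $(a\#\varphi)(b\#\psi)=\sum a(\varphi_1\cdot b)\#\varphi_2\psi$ the triviality of the $(H^*)^*$-action on $k$ gives $(1\#\varphi)(1\#\psi)=1\#\varphi\psi$, so $k\#(H^*)^*\simeq (H^*)^*$ as algebras via $1\#\varphi\leftrightarrow\varphi$, while the smash-product comodule structure $1\#\varphi\mapsto\sum 1\#\varphi_1\ot\varphi_2$ becomes the comultiplication of $(H^*)^*$. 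Transporting along $(H^*)^*\simeq H$, this says precisely that $H$ with the comodule algebra structure $\Delta$ is Frobenius in ${\cal M}^H$.

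The only delicate point is the bookkeeping: one has to check that the canonical identification $(H^*)^*\simeq H$ is compatible with the comultiplications, with the trivial module structure on $A=k$, and with the comodule structure on the smash product, so that the chain of isomorphisms lands on the right comodule algebra $H$; nothing here is deep, but it is where an error could hide. As a check, or as an alternative avoiding double duals, one can argue directly: the $H^*$-action on $H$ coming from $\Delta$ is the hit action $h^*\cdot h=\sum h_1h^*(h_2)$, and for $\lambda\in H^*$ a non-zero right integral the integral identity $(\lambda\ot\mathrm{id})\Delta=\lambda(-)1_H$ is exactly the colinearity condition $\lambda(h^*\cdot h)=h^*(1)\lambda(h)$; that ${\rm Ker}\,\lambda$ contains no non-zero right ideal of $H$ is the classical Larson--Sweedler fact that a finite dimensional Hopf algebra is a Frobenius $k$-algebra with Frobenius homomorphism a non-zero integral. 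Either way criterion $(3)$ yields the conclusion.
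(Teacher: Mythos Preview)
Your argument is correct and follows exactly the paper's route: take $A=k$ as a trivial $H^*$-comodule algebra, apply Theorem~\ref{teoremaFrobenius} to get that $k\#(H^*)^*$ is Frobenius in ${\cal M}^{(H^*)^*}$, and then use $H^{**}\simeq H$. You supply more of the bookkeeping than the paper does, and your alternative direct check via a non-zero integral is a nice redundancy, but the core strategy is identical.
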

\begin{proof}
$k$ is a right $H^*$-comodule algebra in a trivial way, and it is
clear that $k$ is Frobenius in ${\cal M}^{H^*}$. By Theorem
\ref{teoremaFrobenius} we get that $k\# H^{**}$ is Frobenius in
${\cal M}^{H^{**}}$. Since $H^{**}\simeq H$ as Hopf algebras, we
see that $H$ is Frobenius in ${\cal M}^H$.
\end{proof}

The following example shows that the converse of Theorem
\ref{teoremaFrobenius} is not true.

\begin{example}
Let $A=A_0\oplus A_1$ be a superalgebra, i.e. a $C_2$-graded
algebra, which is Frobenius as an algebra, but not graded
Frobenius (i.e. it is not a Frobenius algebra in the category
${\cal M}^{kC_2}$ of supervector spaces). In this example we use
the additive notation for the operation of $C_2$. Examples of such
$A$ are given in \cite[Section 6]{dnn}; the trivial extension
associated to a finite dimensional algebra is one such example.
Let $\mu:A\ra k$ be a linear map whose kernel does not contain
non-zero left ideals of $A$. We define
$$\lambda:A\#(kC_2)^*\ra k,\; \lambda(a\# p_x)=\mu (a) \;\; \mbox{
for any }a\in A, x\in C_2$$ Then $\lambda(y\rightharpoonup (a\#
p_x))=\lambda (a\# p_{x-y})=\mu (a)=\eps (y)\lambda (a\# p_x)$.

On the other hand, ${\rm Ker}\lambda$ does not contain non-zero
left ideals of $A\# (kC_2)^*$. Indeed, assume that $\lambda ((A\#
(kC_2)^*)z)=0$, where $z=a\# p_0+b\# p_1$. Since $(c\#
p_0)z=ca_0\# p_0+cb_1\#p_1$, we have $0=\mu(ca_0)+\mu(cb_1)=\mu
(c(a_0+b_1))$ for any $c\in A$. Thus $\mu (A(a_0+b_1))=0$, showing
that $a_0+b_1=0$, and then $a_0=b_1=0$. Similarly, since $(c\#
p_1)z=ca_1\# p_0+cb_0\#p_1$, we obtain $a_1=b_0=0$. Thus $z=0$. We
conclude that $\lambda$ makes $A\# (kC_2)^*$ a Frobenius algebra
in the category ${\cal M}^{(kC_2)^*}$.
\end{example}

\section{Symmetric smash products} \label{sectionsymmetric}

The following shows that the good connection between a finite
dimensional right $H$-comodule algebra $A$ and the smash product
$A\# H^*$ being Frobenius does not work anymore for the symmetric
property.

\begin{example} \label{exemplusimetric}
Let $C_2=<c>=\{ e,g\}$ be the cyclic group of order 2, and let $A$
be a $C_2$-graded algebra which is symmetric and such that the
homogeneous component $A_e$ is not symmetric. For example one can
take the trivial extension $A=R\oplus R^*$ of a non-symmetric
algebra $R$, with the grading $A_e=R, A_g=R^*$. Then $A$ is a
right $kC_2$-comodule algebra, so we can consider the smash
product $A\# (kC_2)^*$. Denote by $\{ p_e,p_g\}$ the basis of
$(kC_2)^*$ dual to the basis $\{ e,g\}$ of $kC_2$. Then $1\# p_e$
is an idempotent in $A\# (kC_2)^*$ and it is easy to check that
$$(1\# p_e)(A\# (kC_2)^*)(1\# p_e)=A_e\# p_e\simeq A_e=R$$
Then $(1\# p_e)(A\# (kC_2)^*)(1\# p_e)$ is not a symmetric
algebra, so neither is $A\# (kC_2)^*$ by \cite[Exercise
16.25]{lam}.
\end{example}

In this section we discuss the connection between $A$ being a
symmetric algebra in ${\cal M}^H$ with respect to some character
of $H$, and $A\# H^*$ being a symmetric algebra in ${\cal
M}^{H^*}$ with respect to some character of $H^*$ (i.e. a
grouplike element of $H$).

Let $H$ be a finite dimensional Hopf algebra. Then there exists a
character $\alpha \in H^*$ such that $th=\alpha(h)t$ for any left
integral $t$ in $H$ and any $h\in H$; $\alpha$ is called the
distinguished grouplike element of $H^*$, and it also satisfies
$ht'=\alpha^{-1}(h)t'$ for any right integral $t'$ in $H$ and any
$h\in H$, see \cite[Section 10.5]{radford} or \cite[Section
5.5]{DNR}.

Similarly, there exists a distinguished grouplike element $g$ of
$H$, such that $Th^*=h^*(g)T$ for any left integral $T$ on $H$ and
any $h^*\in H^*$. We note that in \cite{radford}, $g^{-1}$ is
called the distinguished grouplike element of $H$; we prefer the
way we defined because $g$ will play the same role for $H$ as
$\alpha$ does for $H^*$. It is showed in \cite[Theorem
10.5.4]{radford} that for any left integral $t$ in $H$
\begin{equation} \label{deltat}
\Delta (t)=\sum S^2(t_2)g^{-1}\ot t_1
\end{equation}
Applying this for $H^*$ we see that for any left integral $T$ on
$H$ one has
\begin{equation} \label{deltaT}
\Delta(T)=\sum (T_2S^2)\alpha^{-1}\ot T_1
\end{equation}
and then for any $h\in H$ \bea T\leftharpoonup h&=& \sum
T_1(h)T_2\\
&=& ((T_2S^2)\alpha^{-1})(h)T_1\\
&=&\sum T_2(S^2(h_1))\alpha^{-1}(h_2)T_1\\
&=&\sum \alpha^{-1}(h_2)(S^2(h_1)\rightharpoonup T)\eea Thus for
any left integral $T$ on $H$ and any $h\in H$
\begin{equation} \label{Trighth}
T\leftharpoonup h=\sum \alpha^{-1}(h_2)(S^2(h_1)\rightharpoonup T)
\end{equation}

Now if $t$ is a left integral in $H$, then $S(t)$ is a right
integral in $H$ and  \bea \Delta
(S(t))&=&\sum S(t_2)\ot S(t_1)\\
&=& \sum S(t_1)\ot S(S^2(t_2)g^{-1}) \;\;\; \mbox{( by }
(\ref{deltat}))\\
&=& \sum S(t_1)\ot gS^2(S(t_2))\\
&=&S(t)_2\ot gS^2(S(t)_1)\eea We conclude that for any right
integral $t$ in $H$
\begin{equation} \label{deltatright}
\Delta(t)=\sum t_2\ot gS^2(t_1)\end{equation}

We also see that for a right integral $t$ in $H$ and $h\in H$ \bea
\sum t_1\ot ht_2&=&\sum \eps (h_1)t_1\ot h_2t_2\\
&=&\sum S(h_1)h_2t_1\ot h_3t_2\\
&=& \sum S(h_1)(h_2t)_1\ot (h_2t)_2\\
&=&\sum \alpha^{-1}(h_2)S(h_1)t_1\ot t_2\eea Thus we showed that
\begin{equation} \label{t1ht2}
\sum t_1\ot ht_2=\sum \alpha^{-1}(h_2)S(h_1)t_1\ot t_2
\end{equation}

\begin{theorem} \label{teoremasimetric}
Let $H$ be a finite dimensional Hopf algebra, and let $g$ and
$\alpha$ be the distinguished grouplike elements of $H$ and $H^*$.
We assume that  $S^2(h)=g^{-1}hg=\sum
\alpha^{-1}(h_1)\alpha(h_3)h_2$ for any $h\in H$. Then a right
$H$-comodule algebra $A$ is $(H,\alpha)$-symmetric if and only if
$A\# H^*$ is $(H^*,g)$-symmetric.
\end{theorem}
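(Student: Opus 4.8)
The plan is to use the characterization of $(H,\alpha)$-symmetry from Proposition 1.2, namely condition (3): $A$ is $(H,\alpha)$-symmetric if and only if there is a linear map $\lambda\colon A\to k$ with $\lambda(ba)=\lambda(af(b))$ (where $f(a)=\alpha^{-1}\cdot a=\sum\alpha^{-1}(a_1)a_0$), with $\lambda(h^*\cdot a)=h^*(1)\lambda(a)$ for all $h^*\in H^*$, and with $\mathrm{Ker}\,\lambda$ containing no non-zero right ideal of $A$. Dually, $A\#H^*$ is $(H^*,g)$-symmetric if and only if there is a linear map $\Lambda\colon A\#H^*\to k$ with $\Lambda(zw)=\Lambda(w\,\tilde f(z))$, where $\tilde f$ is the analogous twist on $A\#H^*$ by the sovereign character $g$ of $H^*$ (i.e. $\tilde f(z)=g^{-1}\rightharpoonup z$ using the $H$-action on $A\#H^*$), with $\Lambda(h\rightharpoonup z)=\varepsilon(h)\Lambda(z)$, and with $\mathrm{Ker}\,\Lambda$ containing no non-zero right ideal of $A\#H^*$.

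For the forward direction, given $\lambda$ as above I would mimic the construction in Theorem 3.1: pick a non-zero right integral $t$ in $H$ and set $\Lambda(a\#h^*)=\lambda(a)h^*(t)$. Theorem 3.1's computation already shows $\Lambda(h\rightharpoonup z)=\varepsilon(h)\Lambda(z)$ and that $\mathrm{Ker}\,\Lambda$ contains no non-zero subobject in ${\cal M}^{H^*}_{A\#H^*}$ — but for symmetry I need the stronger statement that it contains no non-zero right ideal, and more importantly I must verify the twisted-trace identity $\Lambda(zw)=\Lambda(w\,\tilde f(z))$. This is where the hypothesis $S^2(h)=g^{-1}hg=\sum\alpha^{-1}(h_1)\alpha(h_3)h_2$ enters: it says $H$ is cosovereign via $\alpha$, so $f$ is defined, and it ties together the modular data. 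The key computation reduces, after writing out $z=a\#h^*$, $w=b\#k^*$ and using the smash product multiplication, to an identity relating $\lambda$'s twisted trace property, the integral $t$, and formulas (\ref{deltat})–(\ref{t1ht2}). I expect this is the heart of the proof: one expands $\Lambda((a\#h^*)(b\#k^*))=\sum\lambda(a(h^*_1\cdot b))(h^*_2k^*)(t)$ and $\Lambda((b\#k^*)\tilde f(a\#h^*))$, then uses $\lambda(a(h^*_1\cdot b))$-manipulations together with the colinearity $\lambda(h^*\cdot a)=h^*(1)\lambda(a)$ and the comultiplication formula for the integral to match the two sides.

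For the converse, I would argue that the construction $\lambda\mapsto\Lambda$ is essentially bijective: given a $\Lambda$ witnessing $(H^*,g)$-symmetry of $A\#H^*$, restrict it to $A\#1\simeq A$ to recover a candidate $\lambda$, i.e. set $\lambda(a)=\Lambda(a\#T)$ for a suitable left integral $T$ on $H$ (or use $\lambda(a)=\Lambda((a\#1)\cdot(\text{something}))$ to land back in the right normalization), then check that the three defining properties of $\lambda$ follow from those of $\Lambda$ by specializing the relations to elements of the form $a\#1$ and using the $H$-colinearity condition $\Lambda(h\rightharpoonup z)=\varepsilon(h)\Lambda(z)$ to control the $H^*$-component. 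The non-degeneracy transfer (no non-zero right ideal in $\mathrm{Ker}$) should go through the Hopf–Galois/Weak Structure Theorem machinery exactly as in Theorem 3.1, since a right ideal $J$ of $A$ inside $\mathrm{Ker}\,\lambda$ would generate $J\#H^*$ inside $\mathrm{Ker}\,\Lambda$.

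The main obstacle I anticipate is the bookkeeping in the twisted-trace identity: keeping straight the three twists in play — $f$ on $A$ (twist by $\alpha^{-1}$), $\tilde f$ on $A\#H^*$ (twist by $g^{-1}$ via the $H$-action), and $S^2$ appearing through the integral formulas — and checking that the hypothesis $S^2(h)=g^{-1}hg=\sum\alpha^{-1}(h_1)\alpha(h_3)h_2$ is exactly what makes all the twists cancel correctly. In particular the left $H^*$-action on $A$ being related to the $H$-coaction, and the interaction $(h^*S)\cdot a$ appearing in (\ref{formulaacth*}), will need to be threaded through carefully; I would organize the computation by first establishing a clean lemma expressing $\Lambda$ of a product purely in terms of $\lambda$, $t$, $\Delta(t)$, and then applying (\ref{deltatright}) and (\ref{t1ht2}) to finish. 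For the converse it will be important to check that the $\lambda$ recovered from $\Lambda$ is non-zero and correctly normalized, which is where the choice of integral and the relation $Th^*=h^*(g)T$ must be used.
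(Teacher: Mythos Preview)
Your plan matches the paper's proof closely: the forward construction $\Lambda(a\#h^*)=\lambda(a)h^*(t)$ with $t$ a right integral, the converse construction $\tilde\mu(a)=\mu(a\#T)$ with $T$ a left integral on $H$, and the identification of the twisted-trace identity as the heart of the argument (settled via (\ref{deltatright}), (\ref{t1ht2}) and the hypothesis on $S^2$) are exactly what the paper does.

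Two small corrections. First, your worry that for the forward direction you need the ``stronger'' statement that $\mathrm{Ker}\,\Lambda$ contains no non-zero right ideal is unfounded: conditions (3) and (4) in the characterization are equivalent, so the subobject version established in Theorem~\ref{teoremaFrobenius} already suffices. Second, for the converse non-degeneracy the paper does not re-invoke the Hopf--Galois machinery; instead it observes directly that if $I$ is a subobject of $A$ in $_A{\cal M}^H$ with $\tilde\mu(I)=0$, then $I\#T$ is a \emph{left} ideal of $A\#H^*$ (since $(b\#g^*)(a\#T)=b(g^*\cdot a)\#T$) contained in $\mathrm{Ker}\,\mu$, hence zero. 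Your proposed route via $J\#H^*$ as a right ideal also works once you use the $H$-colinearity of $\Lambda$ together with the fact that $\{h\rightharpoonup T:h\in H\}$ spans $H^*$, but the paper's argument is shorter.
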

\begin{proof} We note that
\begin{equation} \label{S-2}
S^{-2}(h)=\sum \alpha (h_1)\alpha^{-1}(h_3)h_2
\end{equation}
for any $h\in H$.

 Assume that $A$ is $(H,\alpha)$-symmetric, and let
$\lambda:A\ra k$ such that $\lambda (ba)=\lambda
(a(\alpha^{-1}\cdot b))=\lambda(a\alpha^{-1}(b_1)b_0)$,
$\lambda(h^*\cdot a)=h^*(1)\lambda(a)$ for any $a,b\in A,h^*\in
H^*$, and also ${\rm Ker}\,\lambda$ does not contain a non-zero
right ideal of $A$. Let $t$ be a non-zero right integral in $H$
and define
$$\overline{\lambda}:A\#H^*\ra k, \;\; \overline{\lambda}(a\#
h^*)=\lambda (a)h^*(t)$$ as in the proof of Theorem
\ref{teoremaFrobenius}. This makes $A\#H^*$ a Frobenius algebra in
the category ${\cal M}^{H^*}$. In order to see that $A\#H^*$ is
symmetric in ${\cal M}^{H^*}$, it remains to show that
$\overline{\lambda}(zz')=\overline{\lambda}(z'(g^{-1}\rightharpoonup
z))$ for any $z,z'\in A\# H^*$, where $g^{-1}\rightharpoonup (a\#
h^*)=a\# (g^{-1}\rightharpoonup h^*)$. Indeed, we see that

\bea \overline{\lambda}((b\# g^*)(g^{-1}\rightharpoonup (a\#
h^*))&=&\overline{\lambda}((b\# g^*)(a\# (g^{-1}\rightharpoonup
h^*)))\\
&=&\sum \overline{\lambda}(b(g^*_1\cdot a)\#
g^*_2(g^{-1}\rightharpoonup h^*))\\
&=& \sum \lambda(bg^*_1(a_1)a_0)g^*_2(t_1)h^*(t_2g^{-1})\\
&=&\sum \lambda(ba_0)g^*(a_1t_1)h^*(t_2g^{-1})\\
&=&\sum \lambda(a_0\alpha^{-1}(b_1)b_0)g^*(a_1t_1)h^*(t_2g^{-1})\\
&=&\sum \lambda(a_0b_0)g^*(a_1b_1S(b_2)t_1)\alpha^{-1}(b_3)h^*(t_2g^{-1})\\
&=&\sum \lambda (((S(b_1)t_1)\rightharpoonup g^*)\cdot
(ab_0))\alpha^{-1}(b_2)h^*(t_2g^{-1})\\
&=&\sum ((S(b_1)t_1)\rightharpoonup
g^*)(1)\lambda(ab_0)\alpha^{-1}(b_2)h^*(t_2g^{-1})\\
&=&\sum g^*(S(b_1)t_1)\lambda(ab_0)\alpha^{-1}(b_2)h^*(t_2g^{-1})\\
&=&\sum \lambda
(ab_0)g^*(S(b_1)t_2)\alpha^{-1}(b_2)h^*(gS^2(t_1)g^{-1}) \;\;\; \mbox{ by } (\ref{deltatright})\\
&=&\sum \lambda (ab_0)g^*(S(b_1)t_2)\alpha^{-1}(b_2)h^*(t_1)\\
&=&\sum \lambda (ab_0)g^*(t_2)\alpha^{-1}(b_3)h^*(\alpha^{-1}(S(b_1))S(S(b_2))t_1) \;\;\; \mbox{ by } (\ref{t1ht2})\\
&=&\sum \lambda (ab_0)g^*(t_2)\alpha^{-1}(b_3)h^*(\alpha(b_1)S^2(b_2)t_1)\\
&=&\sum\lambda(ab_0)g^*(t_2)h^*(S^2(\alpha(b_1)\alpha^{-1}(b_3)b_2)t_1)\\
&=&\sum\lambda(ab_0)g^*(t_2)h^*(S^2(S^{-2}(b_1))t_1)\;\;\; \mbox{ by } (\ref{S-2})\\
&=&\sum\lambda(ab_0)g^*(t_2)h^*(b_1t_1)\\
&=& \sum\lambda(ab_0)g^*(t_2)h^*_1(b_1)h^*_2(t_1)\\
&=&\sum \lambda (a(h^*_1\cdot b))(h^*_2g^*)(t)\\
&=& \sum \overline{\lambda}(a(h^*_1\cdot b)\# h^*_2g^*)\\
&=& \overline{\lambda}((a\# h^*)(b\# g^*))\eea

Conversely, assume that $A\# H^*$ is $(H^*,g)$-symmetric, and let
$\mu:A\# H^*\ra k$ be a linear map whose kernel does not contain
non-zero right ideals of $A\# H^*$, and such that $\mu
(h\rightharpoonup z)=\varepsilon (h)\mu (z)$ and $\mu (zz')=\mu
(z'(g^{-1}\rightharpoonup z))$ for any $h\in H$ and any $z,z'\in
A\#H^*$. Let $T$ be a left integral on $H$ and define
$$\tilde{\mu}:A\ra k,\;\; \tilde{\mu}(a)=\mu (a\# T).$$

Let $a\in A$ and $h^*\in H^*$. We note that $g\rightharpoonup T$
is a right integral on $H$, see \cite[Proposition 5.5.4]{DNR}. Let
$z=a\# (g\rightharpoonup T)$ and $z'=1\#h^*$. Then \bea
zz'-z'(g^{-1}\rightharpoonup z)&=&(a\# (g\rightharpoonup T))(1\#
h^*)-(1\# h^*)(a\#T)\\
&=&a\#(g\rightharpoonup T)h^*-\sum (h^*_1\cdot a)\#h^*_2T\\
&=&a\#h^*(1)(g\rightharpoonup T)-\sum (h^*_1\cdot a)\#h^*_2(1)T\\
&=&h^*(1)a\# (g\rightharpoonup T)-(h^*\cdot a)\#T\eea Since $\mu
(zz')=\mu (z'(g^{-1}\rightharpoonup z))$, we get \bea
\tilde{\mu}(h^*\cdot a)&=&\mu ((h^*\cdot a)\#T)\\ &=&h^*(1)\mu
(a\# (g\rightharpoonup
T))\\
&=&h^*(1)\eps (g)\mu (a\# T)\\
&=&h^*(1)\tilde{\mu}(a)\eea

If $I$ is a subobject of $A$ in $_A{\cal M}^H$ contained in ${\rm
Ker}\tilde{\mu}$, then $\mu (I\# T)=0$. But $I\# T$ is a left
ideal of $A\# H^*$, so it must be zero. Then $I$ must be zero,
too.

 To show that
$A$ is symmetric in ${\cal M}^H$ it only remains to check that
$\tilde{\mu}(ba)=\tilde{\mu}(a(\alpha^{-1}\cdot b))$ for any
$a,b\in A$. This holds true since
\bea \tilde{\mu}(ba)&=& \mu (ba\# T)\\
&=&\mu ((b\# \varepsilon)(a\# T))\\
&=&\mu ((a\# T)(g^{-1}\rightharpoonup(b\# \varepsilon)))\\
&=&\mu ((a\# T)(b\# (g^{-1}\rightharpoonup\varepsilon)))\\
&=&\mu ((a\# T)(b\# \varepsilon))\\
&=&\sum \mu (a(T_1\cdot b)\# T_2)\\
&=&\sum \mu (aT_1(b_1)b_0\# T_2)\\
&=& \sum \mu (ab_0\# (T\leftharpoonup b_1))\\
&=&\sum \mu (ab_0\# \alpha^{-1}(b_2)(S^2(b_1)\rightharpoonup T)) \;\;\; \mbox{ by } (\ref{Trighth})\\
&=&\sum \eps (S^2(b_1))\alpha^{-1}(b_2)\mu (ab_0\# T)\\
&=&\sum \eps (b_1)\alpha^{-1}(b_2)\mu (ab_0\# T)\\
&=&\sum \alpha^{-1}(b_1)\mu (ab_0\# T)\\
&=& \mu (a(\alpha^{-1}\cdot b)\# T)\\
 &=&\tilde{\mu}(a(\alpha^{-1}\cdot b)) \eea
\end{proof}

\begin{remark}
(1) The conditions on $H$ in Theorem \ref{teoremasimetric} are
satisfied if $H$ is involutory and unimodular, and $H^*$ is
unimodular. Indeed, in this case the distinguished grouplike
elements are trivial, i.e. $\alpha=\eps$ and $g=1$. \\
For example, this happens if $H=kG$, where $G$ is a finite group.
Thus a finite dimensional $G$-graded algebra is graded symmetric
if and only if the smash product $A\# (kG)^*$ is symmetric in
${\cal M}^{(kG)^*}$ with respect to 1.\\
(2) In the case where the characteristic of $k$ is 0, it is known
that $H$ is involutory if and only if $H$ is semisimple, if and
only if $H$ is cosemisimple, see \cite[Theorem 16.1.2]{radford},
and in this situation $H$ and $H^*$ are always unimodular. Thus
Theorem \ref{teoremasimetric} applies to any semisimple Hopf
algebra in
characteristic 0.\\
(3) If $k$ has positive characteristic, Theorem
\ref{teoremasimetric} applies to any semisimple cosemisimple Hopf
algebra $H$. Indeed, it is known that any such $H$ is involutory,
see \cite[Theorem 3.1]{eg}.\\
(4) A Hopf algebra satisfying the conditions of Theorem
\ref{teoremasimetric} is not necessarily involutory, and it may be
not unimodular; take for example Sweedler's 4-dimensional Hopf
algebra.
\end{remark}

As a consequence of Theorem \ref{teoremasimetric} we obtain that
if a Hopf algebra $H$ is cosovereign by $\alpha$ and $H^*$ is
cosovereign by $g$, then $H$ is $(H,\alpha)$-symmetric. In fact,
we can prove that $H$ is $(H,\alpha)$-symmetric with less
assumptions.

\begin{proposition} \label{Hopfsimetric}
Let $H$ be a finite dimensional Hopf algebra which is cosovereign
with sovereign element $\alpha$, the distinguished grouplike
element of $H^*$. Then $H$ is $(H,\alpha)$-symmetric.
\end{proposition}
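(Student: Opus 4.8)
The plan is to construct an explicit linear functional $\lambda : H \to k$ and verify that it satisfies the conditions of part (3) of the Proposition characterizing $(H,\alpha)$-symmetry. The natural candidate, by analogy with the classical theory of Frobenius/symmetric Hopf algebras, is to take $\lambda$ to be (a scalar multiple of) a non-zero left integral $T$ on $H$, regarded as an element of $H^*$; that is, $\lambda(h) = T(h)$ for $h \in H$. We need three things: (i) $\lambda$ is left $H^*$-linear in the appropriate sense, i.e. $\lambda(h^* \cdot h) = h^*(1)\lambda(h)$ for all $h^* \in H^*$, $h \in H$; (ii) the twisted trace identity $\lambda(ba) = \lambda(a(\alpha^{-1} \cdot b))$ holds for all $a,b \in H$; and (iii) $\mathrm{Ker}\,\lambda$ contains no non-zero right ideal of $H$. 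Here the left $H^*$-action on $H = A$ coming from the right $H$-comodule structure (the coaction being the comultiplication $\Delta$) is the familiar $h^* \cdot h = h \leftharpoonup h^* = \sum h^*(h_1)h_2$, and the multiplication is that of the Hopf algebra $H$.

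**Next I would** check each point. For (iii), this is precisely the statement that a non-zero left integral $T$ on a finite dimensional Hopf algebra is a \emph{Frobenius homomorphism} — its kernel contains no non-zero left or right ideal; this is classical (see \cite{DNR}, \cite{radford}) and in fact Theorem~\ref{teoremaFrobenius} already yields that $H$ is Frobenius in $\mathcal{M}^H$, so the Frobenius functional $\lambda$ with the right-ideal property exists. For (i), $\lambda(h^* \cdot h) = \sum h^*(h_1) T(h_2) = h^*(T_1) \cdot$-type manipulations; more directly, using that $T$ is a \emph{left} integral on $H$, one has for the convolution in $H^*$ that $h^* \rightharpoonup$-adjustments give $\sum h^*(h_1)T(h_2) = (h^* T)(h) = h^*(1)T(h)$, which is exactly the left-integral property of $T$. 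So (i) is immediate. The substance is (ii): the twisted symmetry $T(ba) = T(a(\alpha^{-1} \cdot b))$. This should follow from the formula for $\Delta(T)$ established just before Theorem~\ref{teoremasimetric}, namely \eqref{deltaT}, $\Delta(T) = \sum (T_2 S^2)\alpha^{-1} \otimes T_1$, together with the hypothesis that $\alpha$ is a sovereign character, i.e. $S^2(h) = \sum \alpha^{-1}(h_1)\alpha(h_3)h_2$. Unwinding $T(ba)$ via the coproduct of $T$ and pushing one factor across using these two identities should produce exactly $T(a(\alpha^{-1}\cdot b))$.

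**The hard part will be** the bookkeeping in step (ii): one must carefully track how the antipode-squared and the distinguished grouplike $\alpha$ interact when one slides a factor $b$ past $a$ under the integral. The cleanest route is probably to use \eqref{Trighth}, $T \leftharpoonup h = \sum \alpha^{-1}(h_2)(S^2(h_1) \rightharpoonup T)$, applied with $h = b$ (or a homogeneous component thereof), combined with the standard adjunction relating $\rightharpoonup$, $\leftharpoonup$ and the pairing, to rewrite $T(ba)$. Concretely, $T(ba) = (T \leftharpoonup b)(a)$-style moves, then invoke \eqref{Trighth}, then use $S^2(b_1) \rightharpoonup T$ evaluated against $a$ and the sovereign identity to collapse $S^2$ back into an $\alpha^{-1}$-twist of $b$, landing on $\sum \alpha^{-1}(b_1) T(a b_0)$-type expression, which is $T(a(\alpha^{-1}\cdot b))$. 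This is essentially the computation already carried out in the second half of the proof of Theorem~\ref{teoremasimetric} (the lines deriving $\tilde\mu(ba) = \tilde\mu(a(\alpha^{-1}\cdot b))$), specialized to $A = H$, $\mu = \overline\lambda$, so one may simply cite or transcribe that argument. Everything else — the left $H^*$-linearity and the non-degeneracy against right ideals — is standard integral theory, so I expect the only real content to be this single twisted-trace verification, and I would organize the proof around it, invoking Theorem~\ref{teoremaFrobenius} for the Frobenius part and reducing to the identities \eqref{deltaT}, \eqref{Trighth} for the twist.
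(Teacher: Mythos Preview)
Your overall strategy---take an integral on $H$ as $\lambda$ and verify the three conditions of the characterizing proposition---is exactly right, and it is what the paper does (the paper works on the dual side, proving that $H^*$ is $(H^*,g)$-symmetric via $\lambda(h^*)=h^*(t)$ for $t$ a right integral in $H$, which is the same argument up to swapping $H\leftrightarrow H^*$). However, two concrete slips would derail your execution as written.

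First, the induced left $H^*$-action on $A=H$ is $h^*\cdot h=\sum h^*(h_2)h_1=h^*\rightharpoonup h$, not $h\leftharpoonup h^*=\sum h^*(h_1)h_2$; the paper's convention has $h^*\cdot a=\sum h^*(a_1)a_0$, and for $A=H$ with coaction $\Delta$ this hits the \emph{second} tensorand. With the correct action, condition (i), namely $\sum h^*(h_2)T(h_1)=h^*(1)T(h)$, says $Th^*=h^*(1)T$, so $\lambda$ must be a \emph{right} integral on $H$, not a left one. If you run your computation for (ii) with a left integral $T$ and formula~\eqref{Trighth}, you obtain $T(ba)=\sum\alpha^{-1}(b_1)T(ab_2)$, which is the twist on the wrong tensor factor and does not match $T(a(\alpha^{-1}\cdot b))=\sum\alpha^{-1}(b_2)T(ab_1)$. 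Switching to a right integral (and using the dual of~\eqref{deltatright}, namely $\Delta(T')=\sum T'_2\otimes\alpha\,(T'_1\!\circ S^2)$, together with the sovereign identity) fixes this cleanly.

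Second, your plan to ``cite or transcribe'' the $\tilde\mu(ba)=\tilde\mu(a(\alpha^{-1}\cdot b))$ computation from the converse half of Theorem~\ref{teoremasimetric} does not work here: that derivation uses the hypothesis that $\mu$ satisfies the $(H^*,g)$-symmetry condition $\mu(zz')=\mu(z'(g^{-1}\rightharpoonup z))$, which in turn presupposes that $H^*$ is cosovereign by $g$. Proposition~\ref{Hopfsimetric} deliberately assumes only that $H$ is cosovereign by $\alpha$, so you cannot import that argument. The paper instead does a short direct calculation from~\eqref{deltatright} and the sovereign identity $S^2(h)=g^{-1}hg$ (on the dual side), avoiding any appeal to Theorem~\ref{teoremasimetric}. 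Once you correct left/right and work with the appropriate $\Delta(T')$ formula, the same three-line computation goes through on the $H$ side.
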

\begin{proof}
In order to use the notation we have already developed, it is more
convenient to show that if $H^*$ is cosovereign by $g$, then $H^*$
is $(H^*,g)$-symmetric. If $t$ is a right integral in $H$, by the
proof of Theorem \ref{teoremaFrobenius} (when we take $A=k$ and
identify $A\#H^*$ with $H^*$) we have that the linear map
$\lambda:H^*\ra k$, $\lambda(h^*)=h^*(t)$ is $H$-linear, and its
kernel does not contain nonzero subobjects of $H^*$ in ${\cal
M}^{H^*}_{H^*}$. On the other hand, for any $h^*,g^*\in H^*$ \bea
\lambda(g^*(g^{-1}\rightharpoonup h^*))&=&\sum
g^*(t_1)h^*(t_2g^{-1})\\
&=&g^*(t_2)h^*(gS^2(t_1)g^{-1})\;\;\; \mbox{ by } (\ref{deltatright})\\
&=&\sum h^*(t_1)g^*(t_2)\\
&=&(h^*g^*)(t)\\
&=&\lambda(h^*g^*)\eea so $\lambda$ makes $H^*$ an
$(H^*,g)$-symmetric algebra.
\end{proof}

\section{Passing to coinvariants} \label{coinvarianti}

If $A$ is a right $H$-comodule algebra which is Frobenius
(respectively symmetric) as an algebra, it is a natural question
to ask whether this property transfers to the subalgebra of
coinvariants $A^{co H}$. It is easy to see that such a transfer
does not hold. Indeed, let $A$ be the algebra from Example
\ref{exemplusimetric}, which is symmetric. $A$ is a
$kC_2$-comodule algebra, and its subalgebra of coinvariants
 is just $A_e$, which is not even Frobenius. The following shows
 that a good transfer occurs if $A$ is Frobenius in the category
 ${\cal M}^H$, provided $H$ is cosemisimple.

 \begin{proposition}
Let $H$ be a cosemisimple Hopf algebra. If $A$ is a right
$H$-comodule algebra which is Frobenius in the category ${\cal
M}^H$, then $A^{co H}$ is a Frobenius  algebra. If moreover, $H$
is involutory and $A$ is $(H,\eps)$-symmetric, then $A^{co H}$ is
symmetric.
 \end{proposition}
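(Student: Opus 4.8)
The plan is to use the fact that a cosemisimple Hopf algebra $H$ admits a normalized right integral, which produces a conditional expectation onto the coinvariants, and then to restrict the Frobenius functional accordingly. Concretely, let $T\in H^*$ be the unique left integral on $H$ with $T(1)=1$; cosemisimplicity guarantees such a $T$ exists. For a right $H$-comodule algebra $A$ with coaction $a\mapsto \sum a_0\ot a_1$, the map $E:A\ra A$, $E(a)=\sum a_0 T(a_1)$, is a projection onto $A^{coH}$, and one checks from the comodule algebra axioms that $E$ is an $A^{coH}$-bimodule map: $E(xa)=xE(a)$ and $E(ax)=E(a)x$ for $x\in A^{coH}$. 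Since $H$ is cosemisimple the integral $T$ can moreover be taken to be a \emph{cocommutative} functional (equivalently $T\circ S=T$), which will be convenient below.

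First I would produce the Frobenius functional on $A^{coH}$. By the equivalences recalled after Theorem~2.4 of \cite{dnn} (conditions (3)--(5) above), $A$ being Frobenius in ${\cal M}^H$ gives a linear map $\lambda:A\ra k$ with $\lambda(h^*\cdot a)=h^*(1)\lambda(a)$ for all $h^*\in H^*$, and ${\rm Ker}\,\lambda$ containing no nonzero right ideal of $A$. The first condition says precisely that $\lambda$ vanishes on the span of all $a_0 h^*(a_1)-h^*(1)a$, i.e.\ that $\lambda$ factors through $E$ in the sense that $\lambda(a)=\lambda(E(a))$; hence $\lambda$ restricts to a functional $\mu:=\lambda|_{A^{coH}}:A^{coH}\ra k$, and $\lambda=\mu\circ E$. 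I would then check associativity of the associated bilinear form on $A^{coH}$, which is immediate since $\mu(xy)=\lambda(xy)$ uses only the ordinary multiplication. The main point to verify is nondegeneracy: suppose $x\in A^{coH}$ with $\mu(xy)=0$ for all $y\in A^{coH}$. I want to conclude $xA=0$ inside $A$, hence $x=0$. For arbitrary $a\in A$ we have $\lambda(xa)=\lambda(E(xa))=\lambda(xE(a))=\mu(xE(a))=0$ since $E(a)\in A^{coH}$ and $E$ is left $A^{coH}$-linear. Thus $xA\subseteq{\rm Ker}\,\lambda$; as $xA$ is a right ideal of $A$ and ${\rm Ker}\,\lambda$ contains no nonzero right ideal, $xA=0$, so $x=0$. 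This shows $A^{coH}$ is Frobenius.

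For the symmetric part, assume in addition $H$ is involutory and $A$ is $(H,\eps)$-symmetric. By Proposition~\ref{sectioncorepresentations} (part (3), with $f={\rm id}$ since $S^2={\rm id}$ forces $A^{(S^2)}=A$ and $u=\eps$), we may take the same $\lambda$ to satisfy the extra trace-like identity $\lambda(ba)=\lambda(ab)$ for all $a,b\in A$. Restricting to $A^{coH}$ gives $\mu(ba)=\mu(ab)$ for $a,b\in A^{coH}$ directly, so the Frobenius functional $\mu$ constructed above is in fact symmetric, and $A^{coH}$ is a symmetric algebra. The one technical wrinkle I expect to be the real content is confirming that the chosen $\lambda$ can simultaneously be taken left-$H^*$-linear (so that it factors through $E$) \emph{and} a trace; but this is exactly what conditions (3)--(5) of the Proposition package together, so no new argument is needed beyond invoking it. The genuinely load-bearing step is therefore the bimodule property of $E$ together with the ``no nonzero right ideal in the kernel'' hypothesis feeding the nondegeneracy argument — everything else is bookkeeping.
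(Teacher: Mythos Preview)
Your proof is correct and follows essentially the same route as the paper: both hinge on the normalized left integral $T$ with $T(1)=1$ supplied by cosemisimplicity, yielding the projection $E(a)=T\cdot a$ onto $A^{coH}$. The paper phrases things dually, restricting the isomorphism $\theta:A\to A^*$ and reducing injectivity of $i^*|_{(A^*)^{coH}}$ to the decomposition $A=A^{coH}+V$ (with $V$ spanned by the elements $h^*\cdot a-h^*(1)a$, and $a=E(a)+(a-E(a))$ giving the splitting), whereas you work directly with the functional $\lambda$ and the $A^{coH}$-bimodule property of $E$; these are equivalent packagings of the same idea, and your aside about $T\circ S=T$ is never actually used.
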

 \begin{proof}
Let $i:A^{co H}\ra A$ be the inclusion map, and let $i^*:A^*\ra
(A^{co H})^*$ be its dual. Since $i$ is a morphism of $A^{co
H},A^{co H}$- bimodules, then so is $i^*$. If $A$ is Frobenius in
${\cal M}^H$, let $\theta:A\ra A^*$ be an isomorphism in the
category ${\cal M}^H_A$. We show that $i^*\theta i:A^{co H}\ra
(A^{co H})^*$ is an isomorphism of right $A^{co H}$-modules, i.e.
$A^{co H}$ is Frobenius. In fact it is enough to show that
$i^*\theta i$ is injective; since $Im\; \theta i=(A^*)^{co H}$,
this is the same with showing that $i^*_{|(A^*)^{co H}}$ is
injective.

The left $H^*$-action on $A^*$ induced by the right $H$-coaction
is $(h^*\cdot a^*)(a)=\sum h^*S(a_1)a^*(a_0)$, for any $A\in A$.
Then $a^*\in (A^*)^{co H}$ if and only if $h^*\cdot a^*=h^*(1)a^*$
for any $h^*\in H^*$, and this means that $a^*(\sum
h^*S(a_1)a_0-h^*(1)a)=0$ for any $a\in A$ and any $h^*\in H^*$.
Since $S$ is bijective ($H$ is cosemisimple), we get that $a^*\in
(A^*)^{co H}$ if and only if $a^*$ vanishes on the subspace
$$V=< h^*(a_1)a_0-h^*(1)a\; |\; h^*\in H^*,a\in A \; >$$
Since
$$Ker\; i^*_{|(A^*)^{co H}}=\{ a^*\in (A^*)^{co H}\; |\; a^*(A^{co
H})=0\;\}$$ we see that $i^*_{|(A^*)^{co H}}$ is injective if and
only if $V+A^{co H}=A$. But this is indeed true, since for a left
integral $T$ on $H$ such that $T(1)=1$, one has $a=T\cdot a
-(T\cdot a-T(1)a)$. Moreover, $T\cdot a\in A^{co H}$, since
$h^*\cdot (T\cdot a)=(h^*T)\cdot a=h^*(1)Ta$ for any $h^*\in H^*$,
and obviously $T\cdot a-T(1)a\in V$. \\

For the second part we just have to note that $i^*\theta i$ is a
morphism of $A^{co H},A^{co H}$-bimodules since $\theta$ is an
isomorphism of $A,A$-bimodules; now the proof of the first part
works also in this case.
 \end{proof}

{\bf Acknowledgment}  The research was supported by the UEFISCDI
Grant PN-II-ID-PCE-2011-3-0635, contract no. 253/5.10.2011 of
CNCSIS. We thank Daniel Bulacu for very useful discussions about
Frobenius algebras.

\end{document}